%
%
%

\documentclass[11pt]{amsart}
\theoremstyle{plain}
\usepackage{curves}
\usepackage{amsmath, amsfonts, amsthm, amscd, amssymb}
\usepackage[all]{xy}
\setlength{\textwidth}{5.5in}
\setlength{\oddsidemargin}{0.50in}
\setlength{\evensidemargin}{0.40in}

\title{A remark on the topology of $(n,n)$ Springer varieties}

\author{Stephan M. Wehrli}
\address{Institut de Math\'ematiques de Jussieu; Universit\'e Paris 7; 175 rue du Chevaleret; bureau 7B3; 75013 Paris, France}
\email{wehrli@math.jussieu.fr}

\theoremstyle{plain}
\newtheorem{theorem}{Theorem}[section]
\newtheorem{lemma}[theorem]{Lemma}
\newtheorem{proposition}[theorem]{Proposition}
\theoremstyle{definition}

\newcommand{\C}{\ensuremath{\mathbb{C}}}

\begin{document}
\bibliographystyle{halpha}

\begin{abstract} We prove a conjecture of Khovanov~\cite{khovanov-2004-2}
which identifies the topological space underlying the Springer variety of complete flags in $\mathbb{C}^{2n}$ stabilized by a fixed nilpotent operator with two Jordan blocks of size $n$.
\end{abstract}
\maketitle


\section{Introduction}

Let $E_n$ be a complex vector space of dimension $2n$ and $z_n\colon E_n\rightarrow E_n$ a nilpotent linear endomorphism with two nilpotent Jordan blocks, each of them of size $n$. A {\em complete flag} in $E_n$ is an ascending sequence of linear subspaces $0\varsubsetneq L_1\varsubsetneq L_2\varsubsetneq\ldots\varsubsetneq L_{2n}=E_n$. The $(n,n)$ {\em Springer variety} is the set
$$
\mathfrak{B}_{n,n}:=\{\mbox{complete flags in $E_n$ stabilized by $z_n$}\},
$$
where a complete flag is said to be {\em stabilized} by $z_n$ if each of the subspaces $L_j$ is stable under $z_n$, i.e. if $z_nL_j\subset L_j$ for all $j\in\{1,\ldots,2n\}$.

It is known that $\mathfrak{B}_{n,n}$ is a complex projective variety of (complex) dimension $n$, and that the irreducible components of $\mathfrak{B}_{n,n}$ are topologically trivial (but algebraically non-trivial) iterated $\mathbb{P}^1$-bundles over a point (where $\mathbb{P}^1$ is the complex projective line, i.e., topologically, $\mathbb{P}^1\cong S^2$). Moreover, a result of Fung~\cite{fung-2002} (going back to earlier work of Spaltenstein~\cite{spaltenstein-1976} and Vargas~\cite{vargas-1979}), describes the irreducible components of $\mathfrak{B}_{n,n}$ explicitly in terms of crossingless matchings of $2n$ points:
\begin{proposition}[Fung]\label{prop:fung}
The irreducible components of $\mathfrak{B}_{n,n}$ are
parametrized by crossingless matchings of $2n$ points. Furthermore, the irreducible component $K_a$ associated to $a\in B^n$ can be described explicitly, as follows:
$$
K_a=\{(L_1,\ldots,L_{2n})\in\mathfrak{B}_{n,n}\colon L_{s_a(j)}=z_n^{-d_a(j)}L_{j-1}\,\forall j\in O_a\}
$$
\end{proposition}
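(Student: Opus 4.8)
The plan is to derive Proposition~\ref{prop:fung} from Spaltenstein's general description of Springer fibers~\cite{spaltenstein-1976} (see also Vargas~\cite{vargas-1979}), specialized to the two-row partition $(n,n)$. Recall that for a nilpotent operator $z$ of Jordan type $\lambda$ on $\mathbb{C}^N$, Spaltenstein stratifies the Springer fiber as $\bigsqcup_T\mathfrak{B}_T$, the union running over the standard Young tableaux $T$ of shape $\lambda$, where $\mathfrak{B}_T$ is the set of $z$-stable flags $(L_1,\ldots,L_N)$ such that, for each $j$, the Jordan type of $z|_{L_j}$ is the shape of the subtableau of $T$ formed by the entries $\le j$; moreover the closures $\overline{\mathfrak{B}_T}$ are irreducible, equidimensional, and are exactly the irreducible components of the Springer fiber. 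Granting this, it suffices to (i) match the standard Young tableaux of shape $(n,n)$ with the crossingless matchings $a\in B^n$, and (ii) show that $\overline{\mathfrak{B}_{T_a}}=K_a$ for the tableau $T_a$ corresponding to $a$.

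For (i), I would use the classical dictionary: reading the entries $1,2,\ldots,2n$ of a standard tableau $T$ of shape $(n,n)$ from left to right, call $j$ an \emph{opener} if it lies in the first row and a \emph{closer} if it lies in the second; standardness makes this a balanced word, and pairing each closer with the nearest preceding unmatched opener produces a crossingless matching $a=a(T)$. This map is a bijection; write $T_a$ for its inverse. Under this identification, $O_a$ is the set of openers of $a$, $s_a(j)$ is the closer matched to the opener $j$, and $d_a(j)$ is the number of arcs of $a$ with both endpoints in the interval $[j,s_a(j)]$ --- equivalently $2d_a(j)=s_a(j)-j+1$, with $d_a(j)=1$ exactly for a short arc $\{j,j+1\}$.

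Step (ii) is the heart of the matter, and I would attack it from two sides. First, fix a flag $(L_\bullet)\in\mathfrak{B}_{T_a}$ and an opener $j\in O_a$, and set $j'=s_a(j)$. Since $a$ is crossingless, all positions in $[j,j']$ are matched among themselves, so one can induct on the nesting depth of the arc $\{j,j'\}$: the tableau $T_a$ fixes the Jordan types of $z_n$ along the steps $L_{j-1}\subseteq\cdots\subseteq L_{j'}$, and --- crucially using that $z_n$ has exactly two Jordan blocks \emph{of equal size} $n$ --- this pins down the position of $L_{j-1}$ with respect to the subspaces $\ker z_n^k$ and $z_n^kE_n$; a short computation then gives the single equality $L_{j'}=z_n^{-d_a(j)}L_{j-1}$, both sides being $z_n$-stable of dimension $j'$. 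Hence $\mathfrak{B}_{T_a}\subseteq K_a$. In the other direction, the relations defining $K_a$ leave genuinely free exactly one choice for each of the $n$ arcs of $a$ --- a choice of line in an associated $2$-dimensional space, i.e.\ a point of $\mathbb{P}^1$ --- which exhibits $K_a$ as an iterated $\mathbb{P}^1$-bundle over a point, in particular an irreducible variety of dimension $n$ containing $\mathfrak{B}_{T_a}$ as a dense subset. Since Spaltenstein's theorem tells us that $\overline{\mathfrak{B}_{T_a}}$ is an irreducible component of $\mathfrak{B}_{n,n}$, hence of dimension $n$, the inclusion $\mathfrak{B}_{T_a}\subseteq K_a$ forces $\overline{\mathfrak{B}_{T_a}}=K_a$. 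As $T\mapsto a(T)$ is a bijection between the standard Young tableaux of shape $(n,n)$ and $B^n$, and every irreducible component of $\mathfrak{B}_{n,n}$ is of the form $\overline{\mathfrak{B}_T}$, the sets $K_a$, $a\in B^n$, are precisely the irreducible components of $\mathfrak{B}_{n,n}$, which is the assertion.

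I expect the main obstacle to be exactly the equivalence used in (ii), between the condition ``the Jordan types of $z_n$ along the flag follow the profile of $T_a$'' and the explicit preimage relations $L_{s_a(j)}=z_n^{-d_a(j)}L_{j-1}$ for $j\in O_a$. Proving it amounts to controlling, for a $z_n$-stable subspace $W\subseteq E_n$, the quantity $\dim z_n^{-d}W=\dim\ker z_n^d+\dim(W\cap z_n^dE_n)$ in terms of the nested-arc data of $a$, and it is here that the hypothesis of two \emph{equal} Jordan blocks is essential: it keeps all the relevant ranks --- and hence the dimensions of the subspaces $z_n^{-d_a(j)}L_{j-1}$ --- constant along each Spaltenstein stratum, which is what lets the combinatorial quantities $d_a(j)$ be matched term by term with the linear algebra. (Alternatively one can bypass Spaltenstein and argue directly: from an arbitrary $z_n$-stable complete flag one reads off a crossingless matching by recording at each step whether passing to $z_n^{-1}L_{j-1}$ raises dimension by more than $1$, checks that the flag then lies in the corresponding $K_a$, and concludes with the same bundle-and-dimension count for each $K_a$ --- more self-contained, but needing essentially the same local computation.)
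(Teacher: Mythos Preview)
The paper does not prove Proposition~\ref{prop:fung}: it is stated as a result of Fung~\cite{fung-2002} (with antecedents in Spaltenstein~\cite{spaltenstein-1976} and Vargas~\cite{vargas-1979}) and used as an input to the proof of Theorem~\ref{thm:main}. So there is no ``paper's own proof'' to compare against.

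That said, your outline is a faithful reconstruction of how the result is actually obtained in the literature. Fung's argument proceeds exactly along the lines you describe: one starts from Spaltenstein's stratification of the Springer fiber by standard tableaux, translates the two-row tableau data into the crossingless-matching language via the balanced-parentheses bijection, and then identifies the closure of each stratum with the locus cut out by the preimage conditions $L_{s_a(j)}=z_n^{-d_a(j)}L_{j-1}$. Your dimension/irreducibility argument for the equality $\overline{\mathfrak{B}_{T_a}}=K_a$ (showing $\mathfrak{B}_{T_a}\subseteq K_a$, then recognizing $K_a$ as a closed iterated $\mathbb{P}^1$-bundle of dimension $n$) is the standard one. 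The technical point you flag --- converting the Jordan-type profile along the flag into the explicit preimage equalities, using that both blocks have the \emph{same} size $n$ so that $\dim z_n^{-d}W$ behaves predictably --- is indeed the substantive computation, and is exactly what Fung carries out (his Theorem~2, specialized to the $(n,n)$ case). Your alternative self-contained approach at the end is also viable and close in spirit to what the present paper does with the $K_a$ once the proposition is in hand.
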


Here, $B^n$ is the set of all crossingless matchings of $2n$ points. Elements of $B^n$ can be thought of as diagrams consisting of $n$ disjoint, nested cups, as in Figure~\ref{fig:cups}. Equivalently, elements of $B^n$ are partitions of the set $\{1,2,\ldots,2n\}$ into pairs, such that there is no quadruple $i<j<k<l$ with $(i,k)$ and $(j,l)$ paired. For an element $a\in B^n$, we denote by $O_a$ the set of all $i$ appearing in a pair $(i,j)\in a$ with $i<j$; and if $(i,j)\in a$ is a pair with $i<j$, then we define $s_a(i):=j$ and $d_a(i):=(s_a(i)-i+1)/2$. Note that $d_a(i)$ is always an integer because $s_a(i)-i-1$ is twice the number of cups that are contained strictly inside the cup with endpoints $i$ and $s_a(i)$.

\newpage

In \cite{khovanov-2004-2}, Khovanov proved that the integer cohomology ring of $\mathfrak{B}_{n,n}$ is isomorphic to the center of the ring $H^n=\bigoplus_{a,b\in B^n}{}_b(H^n)_a$, defined in \cite{MR1928174}. To show this, Khovanov first proved that $\mathfrak{B}_{n,n}$ has the same integer cohomology ring as a topological space $\widetilde{S}\subset(\mathbb{P}^1)^{2n}=\mathbb{P}^1\times\ldots\times\mathbb{P}^1$ ($2n$ factors), defined by $\widetilde{S}:=\bigcup_{a\in B^n}S_a\subset(\mathbb{P}^1)^{2n}$, where
$$
S_a:=\{(l_1,\ldots,l_{2n})\in(\mathbb{P}^1)^{2n}\colon l_j=l_{s_a(j)}\,\forall j\in O_a\}.
$$
The goal of this paper is to show the following stronger statement, which was also conjectured by Khovanov (\cite[Conjecture~1]{khovanov-2004-2}):
\begin{theorem}\label{thm:main}
$\mathfrak{B}_{n,n}$ and $\widetilde{S}$ are homeomorphic.
\end{theorem}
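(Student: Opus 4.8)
The strategy: build an explicit homeomorphism $\mathfrak{B}_{n,n} \to \widetilde{S}$ compatible with the decompositions into $K_a$ and $S_a$. The natural map: given a flag $(L_1,\dots,L_{2n})$, I want to extract $2n$ points in $\mathbb{P}^1$. The key is to produce, for each $j$, a line in some fixed $\mathbb{C}^2$. The obvious candidate uses the quotients $L_j/L_{j-1}$, but these aren't canonically lines in a fixed $\mathbb{P}^1$. Instead, use the nilpotent structure: $z_n$ has two Jordan blocks, so $\ker z_n$ is 2-dimensional, and more relevantly $E_n/\mathrm{im}\, z_n$ or the "associated graded" pieces. Actually the cleanest approach: $z_n$ induces on each graded piece a way to identify consecutive quotients, and the two Jordan blocks give a $\mathbb{C}^2$ worth of "independent" data at each level.

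Let me reconsider. For the pair $(n,n)$, a standard fact (Spaltenstein/Fung) is that a flag in $\mathfrak{B}_{n,n}$ is determined by, roughly, choosing at each step which of the two Jordan strings to "advance along," with a $\mathbb{P}^1$ of choices at genuinely branching steps. One realizes $E_n \cong \mathbb{C}^n \otimes \mathbb{C}^2$ with $z_n = J_n \otimes \mathrm{id}$ where $J_n$ is a single Jordan block; then $z_n$-stable flags correspond to flags compatible with the $J_n$-filtration, and the relevant data at each level lives in $\mathbb{P}(\mathbb{C}^2) = \mathbb{P}^1$. This should give a continuous map $\phi\colon \mathfrak{B}_{n,n} \to (\mathbb{P}^1)^{2n}$. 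One then checks it lands in $\widetilde{S}$: on $K_a$, the constraint $L_{s_a(j)} = z_n^{-d_a(j)} L_{j-1}$ should translate exactly into $l_j = l_{s_a(j)}$. Conversely, build an inverse: given $(l_1,\dots,l_{2n}) \in S_a$, reconstruct the flag by $L_0 = 0$ and $L_j = L_{j-1} + (\text{the line in the }j\text{-th graded piece determined by }l_j)$; continuity and the identity $\phi \circ \psi = \mathrm{id}$, $\psi \circ \phi = \mathrm{id}$ follow from unwinding definitions. Since everything is compact Hausdorff, a continuous bijection suffices.

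The main obstacle is the gluing/consistency across the components. Proposition~\ref{prop:fung} describes each $K_a$ but the decomposition $\mathfrak{B}_{n,n} = \bigcup_a K_a$ has nontrivial overlaps, and I must ensure the maps $\phi|_{K_a}$ agree on intersections — equivalently, that the single global map $\phi$ is well-defined and continuous on all of $\mathfrak{B}_{n,n}$, not just piecewise. This requires a coordinate-free description of $\phi$ (not referring to any particular matching $a$), which is where the tensor-product model $E_n \cong \mathbb{C}^n\otimes\mathbb{C}^2$ and the induced maps on graded pieces $\mathrm{gr}_j = L_j/L_{j-1}$ are essential: I'd define a canonical isomorphism (depending continuously on the flag) between consecutive nonzero graded pieces using $z_n$, thereby pinning all the lines $l_j$ into one fixed $\mathbb{P}^1$. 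Verifying that this canonical identification behaves correctly — in particular that $z_n^{-d_a(j)}L_{j-1}$ is forced to be the flag term making $l_j = l_{s_a(j)}$, and that no flag in $\mathfrak{B}_{n,n}$ fails to be captured — is the technical heart, and checking surjectivity of $\psi$ onto each $S_a$ (i.e. that the reconstructed subspaces genuinely form a $z_n$-stable flag) is the last thing to nail down. Once $\phi$ is globally continuous and bijective, compactness of $\mathfrak{B}_{n,n}$ and Hausdorffness of $(\mathbb{P}^1)^{2n}$ close the argument.
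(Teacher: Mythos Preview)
Your overall strategy matches the paper's: define a single global continuous map $\mathfrak{B}_{n,n}\hookrightarrow Y_{2n}\to(\mathbb{P}^1)^{2n}$, check that each $K_a$ lands in the correct piece, and conclude via compact--Hausdorff. But the construction of the map is the whole difficulty, and your proposed mechanism does not work.

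You suggest identifying the successive graded pieces using $z_n$. The natural candidate is the induced map $z_n\colon L_{j+1}/L_j\to L_j/L_{j-1}$ (or its effect on the ambient $2$-planes $z^{-1}L_{j-1}/L_{j-1}$). But this map is zero precisely when $z_nL_{j+1}\subset L_{j-1}$, i.e.\ when $L_{j+1}=z^{-1}L_{j-1}$ --- exactly the innermost-cup condition in Fung's description. So the identification via $z_n$ degenerates exactly where you need it, and there is no purely algebraic, flag-independent isomorphism $z^{-1}L_{j-1}/L_{j-1}\cong\mathbb{C}^2$ available. Relatedly, your expectation that the cup condition translates to $l_j=l_{s_a(j)}$ is off: at an innermost cup $(i,i+1)$ the lines $L_i/L_{i-1}$ and $L_{i+1}/L_i$ are \emph{complementary} in the $2$-plane $z^{-1}L_{i-1}/L_{i-1}$, not equal, so any honest construction will produce a perpendicularity relation first.

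The paper fills this gap with a hermitian structure. One replaces $L_j/L_{j-1}$ by $L_j\cap L_{j-1}^\perp$ and uses the ``collapse'' map $C\colon E\to\mathbb{C}^2$ (sending every $e_i\mapsto e$, $f_i\mapsto f$); the key Lemma~2.1 shows $C$ restricts to an \emph{isometric} isomorphism on each $(z^{-1}L_{j-1})\cap L_{j-1}^\perp$, so this trivialization never degenerates. The resulting Cautis--Kamnitzer diffeomorphism $\phi_{2n}\colon Y_{2n}\to(\mathbb{P}^1)^{2n}$ carries $K_a$ to $T_a=\{l_{s_a(j)}=l_j^\perp\}$, and only after composing with the involution $I_{2n}$ (taking $l_j\mapsto l_j^\perp$ for even $j$) does one land in $S_a$. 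Finally, the verification that $\phi_{2n}(K_a)=T_a$ is not a direct unwinding but an induction on $n$: one removes an innermost cup $(i,i+1)$ via the map $q_{2n,i}\colon X_{2n,i}\to Y_{2n-2}$ and shows $K_a=q_{2n,i}^{-1}(K_{a'})$, $T_a=g_{2n,i}^{-1}(T_{a'})$, with $\phi$ intertwining $q$ and $g$ (Lemmas~2.4, 3.2, 3.3).
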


\begin{figure}
\begin{picture}(30,15)(-12,-5)
\arc(-30,0){180}
\arc(-10,0){180}
\put(-33,4){\mbox{{\scriptsize $1$}}}
\put(-13,4){\mbox{{\scriptsize $2$}}}
\put(8,4){\mbox{{\scriptsize $3$}}}
\put(28,4){\mbox{{\scriptsize $4$}}}
\end{picture}\vspace*{0.5cm}

\caption{Crossingless matching $\{(1,4),(2,3)\}$.\label{fig:cups}}
\end{figure}
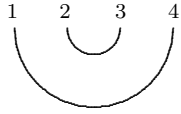

Our proof of Theorem~\ref{thm:main} is based on Proposition~\ref{prop:fung} and on the observation of Cautis and Kamnitzer \cite{cautis-2007} that $\mathfrak{B}_{n,n}$ can be embedded into a (smooth) complex projective variety $Y_{2n}$ diffeomorphic to $(\mathbb{P}^1)^{2n}$. Besides the diffeomorphism
$$
\phi_{2n}\colon Y_{2n}\longrightarrow(\mathbb{P}^1)^{2n}
$$
of Cautis and Kamnitzer, whose definition we review in Section~\ref{sec:diffeomorphism}, we will need an involutive diffeomorphism
$$
I_{2n}\colon(\mathbb{P}^1)^{2n}\longrightarrow(\mathbb{P}^1)^{2n}
$$
defined by $I_{2n}(l_1,\ldots,l_{2n}):=(l'_1,\ldots,l'_{2n})$ with
$$
l'_j:=\begin{cases}
l_j &\mbox{if $j$ is odd},\\
l_j^{\perp} &\mbox{if $j$ is even},
\end{cases}
$$
where $l_j^{\perp}\subset\C^2$ is the orthogonal complement (w.r.t. the standard hermitian product on $\C^2$) of the complex line $l_j\subset\C^2$ (or, equivalently, the antipode of the point $l_j\in\mathbb{P}^1\cong S^2$). In Section~\ref{sec:proof}, we prove the following result, which implies Theorem~\ref{thm:main}:

\begin{proposition}\label{prop:main} The diffeomorphism $I_{2n}\circ\phi_{2n}$ maps $K_a\subset Y_{2n}$ to $S_a\subset(\mathbb{P}^1)^{2n}$ for all $a\in B^n$, and hence $\mathfrak{B}_{n,n}$ to $\widetilde{S}$.
\end{proposition}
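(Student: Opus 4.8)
The plan is to prove the inclusion $I_{2n}\circ\phi_{2n}(K_a)\subseteq S_a$ for every $a\in B^n$ and then to upgrade it to an equality by a dimension argument. Granting this, the remaining assertion of the proposition (that $\mathfrak{B}_{n,n}$ is carried to $\widetilde{S}$) is immediate, since $\mathfrak{B}_{n,n}=\bigcup_{a\in B^n}K_a$ by Proposition~\ref{prop:fung}, and Theorem~\ref{thm:main} follows because $I_{2n}\circ\phi_{2n}$ is a homeomorphism. For the upgrade, note first that $S_a$ is the subset of $(\mathbb{P}^1)^{2n}$ cut out by the $n$ equations $l_j=l_{s_a(j)}$ ($j\in O_a$), and these involve pairwise disjoint coordinates because the cups of $a$ partition $\{1,\dots,2n\}$; hence $S_a$ is homeomorphic to $(\mathbb{P}^1)^n$. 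On the other hand $K_a$, being an $n$-dimensional topologically trivial iterated $\mathbb{P}^1$-bundle over a point (as recalled in the introduction), is also homeomorphic to $(\mathbb{P}^1)^n$. So $K_a$ and $S_a$ are compact connected manifolds of real dimension $2n$; once we know $I_{2n}\circ\phi_{2n}$ maps $K_a$ into $S_a$, its restriction is a continuous injection of $2n$-manifolds (being the restriction of a homeomorphism $Y_{2n}\to(\mathbb{P}^1)^{2n}$), hence open by invariance of domain, so its image is open and compact, therefore clopen, in the connected space $S_a$, therefore all of $S_a$.

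To prove the inclusion, fix $a\in B^n$, take $(L_1,\dots,L_{2n})\in K_a$, and write $(l_1,\dots,l_{2n}):=\phi_{2n}(L_1,\dots,L_{2n})$ and $(l'_1,\dots,l'_{2n}):=I_{2n}(l_1,\dots,l_{2n})$, so that $l'_m=l_m$ for $m$ odd and $l'_m=l_m^{\perp}$ for $m$ even. Fix a cup $(j,k)$ of $a$, with $j\in O_a$ and $k=s_a(j)$, and set $d=d_a(j)$, so that $k-j=2d-1$ is odd. Then exactly one of $j,k$ is even, and therefore the equation $l'_j=l'_k$ that we must verify is equivalent to $l_j=l_k^{\perp}$ in $\mathbb{P}^1$. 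Since Proposition~\ref{prop:fung} gives $L_k=z_n^{-d}L_{j-1}$, the entire proposition comes down to the single-cup implication
\begin{equation*}
L_{s_a(j)}=z_n^{-d_a(j)}L_{j-1}\quad\Longrightarrow\quad l_{s_a(j)}=l_j^{\perp}.
\end{equation*}

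I would prove this implication by unwinding the explicit description of $\phi_{2n}$ recalled in Section~\ref{sec:diffeomorphism}, according to which each coordinate $l_m$ is extracted from a bounded number of neighbouring subspaces of the flag after trivializing the relevant tautological bundle on $Y_{2n}$ with the standard hermitian metric. The natural induction is on $d$ --- equivalently, by the remark following Proposition~\ref{prop:fung}, on the number of cups of $a$ nested strictly inside $(j,k)$ --- the inductive hypothesis taking care of the $d-1$ intermediate coordinates $l_{j+1},\dots,l_{k-1}$, which are governed by those inner cups and whose compatibility is exactly what makes the identity $L_k=z_n^{-d}L_{j-1}$ possible. The orthogonal complement, rather than plain equality, shows up because $z_n$, restricted to the two-dimensional subquotients relevant here and transported through the hermitian splitting defining $\phi_{2n}$, sends a line to its perpendicular --- and this is precisely the discrepancy that $I_{2n}$ is designed to cancel.

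I expect this last computation to be the main obstacle: reconciling the \emph{algebraic} description of $K_a$ from Proposition~\ref{prop:fung}, in which $L_k$ is the full preimage of $L_{j-1}$ under $z_n^{d}$ --- a relation spanning $2d-1$ flag steps and a priori coupling many coordinates of $\phi_{2n}$ --- with the \emph{metric} description of $\phi_{2n}$, and keeping careful track through the inductive step of how $z_n$ interacts with the hermitian trivialization. Everything else --- the parity bookkeeping, the reduction to a single cup, and the final dimension/invariance-of-domain argument --- is formal.
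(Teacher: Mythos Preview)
Your overall strategy --- prove the inclusion $I_{2n}\circ\phi_{2n}(K_a)\subset S_a$ and then upgrade via invariance of domain --- is sound, and the parity bookkeeping reducing to $l_{s_a(j)}=l_j^\perp$ is exactly the right reformulation (the paper calls this locus $T_a$ and observes $I_{2n}(T_a)=S_a$). But there is a genuine error in your account of the mechanism: $z$ does \emph{not} send a line to its perpendicular on the relevant two-dimensional subquotients. On the contrary, Lemma~\ref{lemma:key} shows that for a stable $W$ with $\ker z\subset W\subset\operatorname{im} z$, the map $z$ carries $(z^{-1}W)\cap W^\perp$ to $W\cap(zW)^\perp$ in a way that \emph{commutes} with $C$; under $C$ the line is therefore \emph{fixed}, not flipped. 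The orthogonality $l_{i+1}=l_i^\perp$ for an innermost cup $(i,i+1)$ has nothing to do with $z$: it is simply the statement that $L_i\cap L_{i-1}^\perp$ and $L_{i+1}\cap L_i^\perp$ are orthogonal lines inside the two-plane $(z^{-1}L_{i-1})\cap L_{i-1}^\perp$, which $C$ maps isometrically to $\C^2$. Note also that your ``single-cup implication'' is false if read as depending only on the condition $L_k=z_n^{-d}L_{j-1}$: the line $l_k=C(L_k\cap L_{k-1}^\perp)$ depends on $L_{k-1}$, which is pinned down only by the Fung conditions for the \emph{inner} cups, so those must be part of the hypothesis (as you seem to realise later). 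Once these points are corrected, your induction on $d$ becomes: collapse an innermost inner cup via $(L_1,\dots)\mapsto(L_1,\dots,L_{i-1},zL_{i+2},\dots)$, use Lemma~\ref{lemma:key} to see that all surviving $l$-coordinates are unchanged, and iterate until the outer cup has depth~$1$.

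The paper packages this same idea as an induction on $n$ rather than on cup depth, and it obtains the equality $\phi_{2n}(K_a)=T_a$ directly --- not merely the inclusion --- from the commutative square of Lemma~\ref{lemma:commute} together with the set-theoretic identities $K_a=q^{-1}(K_{a'})$ and $T_a=g^{-1}(T_{a'})$ (Lemmas~\ref{lemma:q} and~\ref{lemma:g}), where $a'$ is $a$ with one innermost cup removed. This makes your invariance-of-domain step unnecessary; the essential analytic input is the same Lemma~\ref{lemma:key} in both approaches.
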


The author had the main idea for this article in Spring 2007 while he was preparing a talk for an informal seminar on link homology and coherent sheaves organized by Mikhail Khovanov at Columbia University. In a recent article~\cite{russell-2008}, Russell and Tymoczko studied an action of the symmetric group $S_{2n}$ on the cohomology ring of $\mathfrak{B}_{n,n}$. In this context, they also proved Theorem~\ref{thm:main}. Although our proof is similar to theirs, our work is completely independent.

{\em Acknowledgments.}
The author would like to thank Mikhail Khovanov for helpful conversations and for pointing him to the papers \cite{cautis-2007} and \cite{fung-2002}. The author was supported by fellowships of the Swiss National Science Foundation and of the Fondation Sciences Math\'ematiques de Paris.


\section{Diffeomorphism $\phi_m$}\label{sec:diffeomorphism}
In the following, $E$ is the complex vector space $E:=\C^N\oplus\C^N$ (for some $N>0$), and $z\colon E\rightarrow E$ is the nilpotent linear endomorphism given by $ze_j:=e_{j-1}$ and $zf_j:=f_{j-1}$ for all $j\in\{2,\ldots,N\}$, and $ze_1:=zf_1:=0$, where $\{e_1,\ldots,e_N\}$ is the standard basis for the first $\C^N$ summand in $E$, and $\{f_1,\ldots,f_N\}$ is the standard basis of the second $\C^N$ summand in $E$. For $n\leq N$, we denote by $E_n\subset E$ the subspace $E_n:=\C^n\oplus\C^n=\mbox{span}(e_1,\ldots,e_n)\oplus\mbox{span}(f_1,\ldots,f_n)$, or equivalently, $E_n=z^{-n}(0)=\mbox{ker}(z^n)=\mbox{im}(z^{N-n})$, and we denote by $\langle.,.\rangle_E$ the standard hermitian product on $E$, satisfying
$$\langle e_i,e_j\rangle_E :=\langle f_i,f_j\rangle_E:=\delta_{i,j}\quad,\quad \langle e_i,f_j\rangle_E :=0,$$
for all $i,j\in\{1,\ldots,N\}$, and by $\langle.,.\rangle$ the standard hermitian product on $\C^2$, satisfying
$$\langle e,e\rangle :=\langle f,f\rangle :=1\quad,\quad\langle e,f\rangle :=0,$$
where $\{e,f\}$ is the standard basis of $\C^2$.

\subsection{Stable subspaces}\label{subs:stable}
A subspace $W\subset E$ is called {\em stable} under $z$ if it satisfies $zW\subset W$. Note that this condition also implies $z^2W\subset zW$ and $W\subset z^{-1}W$, so if $W$ is stable under $z$, then so are its images and preimages under $z$. Moreover, if a stable subspace $W$ satisfies $W\subset \mbox{im}(z)$, then $z\colon z^{-1}W\rightarrow W$ is surjective and therefore
$$
\mbox{dim}((z^{-1}W)\cap W^{\perp})=\mbox{dim}(z^{-1}W/W)=\mbox{dim}(z^{-1}W)-\mbox{dim}(W)=\mbox{dim}(E_1)=2
$$
where we have used that $z^{-1}W\supset z^{-1}(0)=\mbox{ker}(z)=E_1$. Let $C\colon E\rightarrow\C^2$ be the linear map defined by $C(e_j):=e$ and $C(f_j):=f$ for all $j\in\{1,\ldots,N\}$. The following lemma is taken from \cite[Lemma~2.2]{cautis-2007}:

\begin{lemma}\label{lemma:isomorphism}
If $W\subset E$ is stable under $z$ and contained in $\mbox{{\normalfont im}}(z)$, then the restriction $C|_{(z^{-1}W)\cap W^{\perp}}\colon (z^{-1}W)\cap W^{\perp}\rightarrow \C^2$ is
an isomotric isomorphism.
\end{lemma}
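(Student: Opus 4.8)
\emph{Proposal.} The plan is to reduce the entire lemma to one Hermitian-form identity. Write $V:=(z^{-1}W)\cap W^{\perp}$. It suffices to prove that
\[
\langle Cv,Cw\rangle=\langle v,w\rangle_E\qquad\text{for all }v,w\in V,
\]
because this forces $C|_V$ to be injective (if $Cv=0$ then $\langle v,v\rangle_E=0$, so $v=0$), and since the computation preceding the lemma has already established $\dim_{\C}V=2=\dim_{\C}\C^2$, an injective $\C$-linear map here is automatically bijective; being an isometry is then part of what was proved. So all the content sits in the displayed identity.

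To attack it, I would first rewrite $C$ in a form adapted to $z$. Let $P_1$ be the orthogonal projection of $E$ onto $E_1=\ker z=\mathrm{span}(e_1,f_1)$, and let $z^{*}$ be the adjoint of $z$ with respect to $\langle\cdot,\cdot\rangle_E$ (so $z^{*}e_j=e_{j+1}$, $z^{*}f_j=f_{j+1}$ for $j<N$ and $z^{*}e_N=z^{*}f_N=0$). Since $Ce_j=e$ and $Cf_j=f$ for all $j$, one gets (with $\langle\cdot,\cdot\rangle_E$ linear in the first slot) $Cv=\langle v,g\rangle_E\,e+\langle v,h\rangle_E\,f$, where $g=\sum_j e_j=\sum_{k\ge0}(z^{*})^{k}e_1$ and $h=\sum_j f_j=\sum_{k\ge0}(z^{*})^{k}f_1$ (finite sums, as $z$ is nilpotent); hence $\langle v,g\rangle_E=\sum_{k\ge0}\langle z^{k}v,e_1\rangle_E$ and likewise for $h$. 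Equivalently, $C=(C|_{E_1})\circ\Phi$ on all of $E$, where $\Phi v:=\sum_{k\ge0}P_1(z^{k}v)=P_1(I-z)^{-1}v\in E_1$. Since $C|_{E_1}\colon E_1\to\C^2$ is manifestly an isometric isomorphism ($e_1\mapsto e$, $f_1\mapsto f$), the lemma reduces to showing that $\Phi$ restricts to an isometry $V\to E_1$, i.e.\ that $\langle\Phi v,\Phi w\rangle_E=\langle v,w\rangle_E$ for $v,w\in V$.

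For this the key remark is $P_1=I-z^{*}z$ (both sides are the orthogonal projection onto $\ker z$). Using that $P_1$ is self-adjoint and idempotent,
\[
\langle\Phi v,\Phi w\rangle_E=\sum_{j,k\ge0}\langle z^{j}v,\,P_1 z^{k}w\rangle_E=\sum_{j,k\ge0}\bigl(\langle z^{j}v,z^{k}w\rangle_E-\langle z^{j+1}v,z^{k+1}w\rangle_E\bigr),
\]
and this telescopes: reindexing the second sum and cancelling leaves exactly the terms with $j=0$ or $k=0$, namely $\sum_{k\ge0}\langle v,z^{k}w\rangle_E+\sum_{j\ge1}\langle z^{j}v,w\rangle_E$. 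Now the defining properties of $V$ finish the proof: $v,w\in W^{\perp}$, while for every $k\ge1$ we have $z^{k}v,z^{k}w\in W$ because $zv,zw\in W$ and $W$ is $z$-stable; hence every term involving a positive power of $z$ vanishes, and what remains is precisely $\langle v,w\rangle_E$.

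I do not expect a serious obstacle: the only genuine idea is the factorization $C=(C|_{E_1})\circ\Phi$ together with the identity $P_1=I-z^{*}z$, which converts the inner-product computation into a telescoping sum. The points worth watching are purely bookkeeping — that all the sums are finite (nilpotency of $z$), that the hypothesis $W\subset\mathrm{im}(z)$ enters only through the dimension count $\dim_\C V=2$ and nowhere in the Hermitian-form identity, and that ``isometric isomorphism'' follows immediately once that identity and the equality of dimensions are both in hand.
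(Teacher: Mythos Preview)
Your argument is correct. Both your proof and the paper's reduce everything to the single identity $\langle Cv,Cw\rangle=\langle v,w\rangle_E$ for $v,w\in V$, and both ultimately rest on the observation that $\langle z^{k}v,w\rangle_E=\langle v,z^{k}w\rangle_E=0$ for all $k\geq 1$, since $z^{k}v,z^{k}w\in W$ while $v,w\in W^{\perp}$. The difference is in how that observation is reached. The paper works in coordinates: it decomposes $v=\sum_j v_j$ and $w=\sum_j w_j$ along the grading $E=\bigoplus_j\mathrm{span}(e_j,f_j)$, expands $\langle Cv,Cw\rangle-\langle v,w\rangle_E$ as the off-diagonal sum $\sum_{i\neq j}\langle C(v_i),C(w_j)\rangle$, and recognizes each fixed-offset slice $\sum_{i=j+k}$ directly as an inner product of the form $\langle v,z^{k}w\rangle_E$. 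You instead package the same computation operator-theoretically, via the factorization $C=(C|_{E_1})\circ\Phi$ with $\Phi=P_1(I-z)^{-1}$ and the identity $P_1=I-z^{*}z$, which turns the double sum into a diagonal telescope whose surviving boundary terms are exactly those same inner products. The paper's route is shorter and entirely elementary; yours introduces a bit more machinery but makes the role of the shift structure (and the nilpotency) more transparent, and the factorization $C=(C|_{E_1})\circ P_1(I-z)^{-1}$ is a pleasant formula in its own right.
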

 
For the convenience of the reader, we recall the proof given in \cite{cautis-2007}.

\begin{proof} Since $(z^{-1}W)\cap W^{\perp}$ is two-dimensional, it suffices to show that the restriction of $C$ to $(z^{-1}W)\cap W^{\perp}$ is an isometry. For this, let $v,w\in(z^{-1}W)\cap W^{\perp}$ with $v=v_1+\ldots+v_N$ and $w=w_1+\ldots+w_N$ where $v_j,w_j\in \text{span}(e_j,f_j)$. Then we have
$$\langle v,w\rangle_E=\sum_i\langle v_i,w_i\rangle_E=\sum_i\langle C(v_i),C(w_i)\rangle$$
and
$$\langle C(v),C(w)\rangle = \langle \sum_i C(v_i),\sum_j C(w_j)\rangle=\sum_{i,j}\langle C(v_i), C(w_j)\rangle.$$
To prove that the restriction of $C$ to $(zW)\cap W^{\perp}$ is an isometry, i.e. that $\langle v,w\rangle_E=\langle C(v),C(w)\rangle$, we must therefore show $\sum_{i\neq j}\langle C(v_i), C(w_j)\rangle=0$. We will actually prove a stronger statement, namely that $\sum_{i=j+k}\langle C(v_i), C(w_j)\rangle=0$ for each fixed $k\neq 0$. Assuming $k>0$ (the case $k<0$ being similar), we can write
$$
\sum_{i=j+k}\langle C(v_i), C(w_j)\rangle=
\sum_{i=j+k}\langle v_i, w_j\rangle_E= \langle v,z^kw\rangle_E,
$$
and since $v,w\in (z^{-1}W)\cap W^{\perp}$, we have $v\in W^{\perp}$ and $z^kw\in z^k(z^{-1}W)\subset z^{k-1}W\subset W$, whence $\langle v,z^kw\rangle_E=0$, as desired.
\end{proof}

\begin{lemma}\label{lemma:key}
Let $W\subset E$ be a stable subspace such that $\mbox{{\normalfont ker(z)}}\subset W\subset \mbox{{\normalfont im}}(z)$. Then $z$ maps $W^{\perp}\cap z^{-1}W$ isomorphically to $(zW)^{\perp}\cap W$, and the following diagram commutes:
\begin{equation*}
\xymatrix{
 {(z^{-1}W)\cap W^{\perp}}  \ar[dr]_C\ar[rr]^z &&
 {W\cap (zW)^{\perp}} \ar[dl]^C \\
 & {\C^2} &
}
\end{equation*}
\end{lemma}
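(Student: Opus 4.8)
The plan is to decompose every vector of $E$ according to the ``levels'' $\operatorname{span}(e_j,f_j)$, $j=1,\dots,N$, exactly as in the proof of Lemma~\ref{lemma:isomorphism}, and to deduce that $z$ is an isomorphism from a dimension count plus the elementary fact that $z$ is injective on $W^\perp$. The one observation driving everything is this: since $\ker(z)=E_1=\operatorname{span}(e_1,f_1)\subseteq W$, any $v\in W^\perp$ is orthogonal to $E_1$, so — writing $v=v_1+\dots+v_N$ with $v_j\in\operatorname{span}(e_j,f_j)$ — we have $v_1=0$. Two consequences: $W^\perp\cap\ker(z)=0$, so $z$ is injective on every subspace of $W^\perp$; and $C(v)=\sum_{j\ge 2}C(v_j)$.

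Next I would check that $z$ maps $(z^{-1}W)\cap W^\perp$ into $W\cap(zW)^\perp$. That $zv\in W$ is immediate from $v\in z^{-1}W$. For orthogonality to $zW$, take $w\in W$; since $z$ restricts to a hermitian isometry from $\operatorname{span}(e_j,f_j)$ onto $\operatorname{span}(e_{j-1},f_{j-1})$ for $j\ge 2$, one computes $\langle zv,zw\rangle_E=\sum_{j\ge 2}\langle v_j,w_j\rangle_E=\langle v,w\rangle_E-\langle v_1,w_1\rangle_E=0$, using $v\perp W\ni w$ and $v_1=0$. (Equivalently, $z^\ast z=\operatorname{id}_E-P_{E_1}$ with $P_{E_1}$ the orthogonal projection onto $\ker(z)$, and both summands annihilate the pair $(v,w)$.) Hence $zv\in W\cap(zW)^\perp$.

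To conclude the first assertion I would compare dimensions. The displayed computation preceding Lemma~\ref{lemma:isomorphism}, applied to $W$, gives $\dim\big((z^{-1}W)\cap W^\perp\big)=2$; applied to the stable subspace $zW\subseteq\operatorname{im}(z)$ it gives $\dim\big((z^{-1}(zW))\cap(zW)^\perp\big)=2$, and since $\ker(z)\subseteq W$ we have $z^{-1}(zW)=W$, so $\dim\big(W\cap(zW)^\perp\big)=2$ as well. Therefore $z$ restricts to an injective linear map between two $2$-dimensional spaces, hence an isomorphism. Commutativity of the triangle is then transparent: for $v\in(z^{-1}W)\cap W^\perp$ we have $v_1=0$, and $C$ sends $v_j$ and $zv_j$ (the same pair of coordinates, moved down one level) to the same vector of $\C^2$, whence $C(zv)=\sum_{j\ge 2}C(zv_j)=\sum_{j\ge 2}C(v_j)=C(v)$.

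I expect the only genuinely delicate point to be the verification that $zv\perp zW$: this is exactly where both hypotheses $\ker(z)\subseteq W\subseteq\operatorname{im}(z)$ enter — the first through $v_1=0$, the second to keep the source space $2$-dimensional — and one must remember that $z$ is an isometry between levels only away from the bottom one. The remaining steps are bookkeeping.
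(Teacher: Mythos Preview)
Your proof is correct and follows essentially the same approach as the paper: both use the level decomposition $v=v_1+\cdots+v_N$, observe $v_1=0$ from $\ker(z)\subset W$, verify $\langle zv,zw\rangle_E=\langle v,w\rangle_E=0$ via the level-wise isometry of $z$, and deduce $C(zv)=C(v)$ from $C(zv_j)=C(v_j)$ for $j\ge 2$. The only cosmetic difference is that you argue injectivity of $z$ directly from $\ker(z)\cap W^\perp=0$ and then match dimensions, whereas the paper states the two dimensions up front and lets the commuting triangle (with $C$ an isomorphism on the source) do the work; these are interchangeable.
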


\begin{proof}
It is apparent that $W\cap (zW)^{\perp}\cong W/(zW)$ is two-dimensional, and, by the previous lemma, $C$ restricts to an isomorphism on $(z^{-1}W)\cap W^{\perp}$, so we only need to prove that $z$ maps elements of $(z^{-1}W)\cap W^{\perp}$ to elements of $W\cap (zW)^{\perp}$, and that the above diagram commutes. Thus, let $v\in (z^{-1}W)\cap W^{\perp}$, and write $v$ as
$$v=v_1+\ldots+v_N$$
for $v_j\in\mbox{span}(e_j,f_j)$. Since $v\in W^{\perp}$ and $W\supset\mbox{ker}(z)=E_1=\mbox{span}(e_1,f_1)$, we have $v_1=0$, and since $C(zv_j)=C(v_j)$ for all $j\geq 2$, this implies $C(zv)=C(v)$. We clearly have $zv\in W$ (because $v\in z^{-1}W$), so the only thing that remains to be shown is that $zv\in (zW)^{\perp}$. For this, consider any $w\in W$ and write $w$ as $w=w_1+\ldots+w_N$ for $w_j\in\mbox{span}(e_j,f_j)$. Since $\langle zv_j,zw_j\rangle_E=\langle v_j,w_j\rangle_E$ for all $j\geq 2$, and since $v_1=0$ and $v\in W^{\perp}$, we see that $\langle zv,zw\rangle_E=\langle v,w\rangle_E=0$, and thus $zv\in (zW)^{\perp}$.
\end{proof}

\subsection{$Y_m$ and $\phi_m$}\label{subs:yn}
For $m\leq N$, Cautis and Kamnitzer~\cite[Section~2]{cautis-2007} define a complex projective variety $Y_m$,
$$
Y_m:=\{(L_1,\ldots,L_m)\in F_m\,\colon\,\mbox{dim}(L_j)=j\,\text{ and }\,zL_j\subset L_j\,\forall j\},
$$
where $F_m$ is the set of all partial flags $0\varsubsetneq L_1\varsubsetneq L_2\varsubsetneq\ldots\varsubsetneq L_m\subset E$. Note that the conditions $zL_j\subset L_j$ and $zL_{j-1}\subset L_{j-1}$ imply that $z$ descends to an endomorphism of $L_j/L_{j-1}$, and since $L_j/L_{j-1}$ is one-dimensional and $z$ nilpotent, this endomorphism must be the zero-map, so the spaces $L_j$ in $(L_1,\ldots,L_m)\in Y_m$ actually satisfy the seemingly stronger condition $zL_j\subset L_{j-1}$. In particular, $L_m\subset z^{-1}L_{m-1}\subset z^{-2}L_{m-2}\subset\ldots\subset z^{-m}(0)=\mbox{ker}(z^m)=E_m$, so as far as the definition of $Y_m$ is concerned, we could restrict ourselves to the space $E_m=\C^m\oplus\C^m$ instead of working with the bigger space $E=\C^N\oplus\C^N$. In particular, $Y_m$ is independent of the choice of $N$ (as long as $N\geq m$).

Note also that the assignment $(L_1,\ldots,L_{m-1},L_m)\mapsto (L_1,\ldots,L_{m-1})$ defines a $\mathbb{P}^1$-bundle $Y_m\rightarrow Y_{m-1}$. Indeed, a point in the fiber above $(L_1,\ldots,L_{m-1})\in Y_{m-1}$ is obtained from $(L_1,\ldots,L_{m-1})$ by choosing an $L_m$ such that $L_{m-1}\subset L_m\subset z^{-1}L_{m-1}$, and since $z^{-1}L_{m-1}/L_{m-1}$ is two-dimensional, we have a $\mathbb{P}^1$ worth of choices. Denoting by $L_{j-1}^{\perp}$ the orthogonal complement of $L_{j-1}$ w.r.t. $\langle.,.\rangle_E$, we can identify $z^{-1}L_{m-1}/L_{m-1}$ with $(z^{-1}L_{m-1})\cap L_{m-1}^{\perp}$, and by Lemma~\ref{lemma:isomorphism}, the map $C\colon E\rightarrow\C^2$ identifies $(z^{-1}L_{m-1})\cap L_{m-1}^{\perp}$ with $\C^2$. Therefore, the $\mathbb{P}^1$-bundle $Y_m\rightarrow Y_{m-1}$ is topologically trivial (i.e., topologically, $Y_m\cong\mathbb{P}^1\times Y_{m-1}$), and Cautis and Kamnitzer use this to define a diffeomorphism
$$
\phi_m\colon Y_m\longrightarrow (\mathbb{P}^1)^m
$$
by $\phi_m(L_1,\ldots,L_m):=(C(L_1),C(L_2\cap L_1^{\perp}),C(L_3\cap L_2^{\perp}),\ldots,C(L_m\cap L_{m-1}^{\perp}))$.

\subsection{Subvarieties $X_{m,i}\subset Y_m$}

For each $i\in\{1,\ldots,m-1\}$, Cautis and Kamnitzer \cite[Section~2]{cautis-2007} define a subvariety $X_{m,i}\subset Y_m$,
$$
X_{m,i}:=\{(L_1,\ldots,L_m)\in Y_m\,\colon\,L_{i+1}=z^{-1}(L_{i-1})\},
$$
together with a surjection
$$
q_{m,i}\colon X_{m,i}\longrightarrow Y_{m-2},
$$
given by $q_{m,i}(L_1,\ldots,L_m):=(L_1,\ldots,L_{i-1},zL_{i+2},\ldots,zL_m)\in Y_{m-2}$. The following (easy) Lemma was shown in \cite[Theorem~2.1]{cautis-2007}.

\begin{lemma}
The map $\phi_m\colon Y_m\rightarrow(\mathbb{P}^1)^m$ takes $X_{i,m}$ diffeomorphically to
$$A_{m,i}:=\{(l_1,\ldots,l_m)\in (\mathbb{P}^1)^m\,\colon\,l_{i+1}=l_i^{\perp}\},$$
where $l_i^{\perp}$ denotes the orthogonal complement of the line $l_i\subset\mathbb{C}^2$ w.r.t. $\langle.,.\rangle$.
\end{lemma}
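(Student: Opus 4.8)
The plan is to leverage the fact that $\phi_m$ is already a diffeomorphism of $Y_m$ onto $(\mathbb{P}^1)^m$: once the set-theoretic equality $\phi_m(X_{m,i})=A_{m,i}$ is established, $\phi_m$ automatically restricts to a diffeomorphism of $X_{m,i}$ onto $A_{m,i}$. So the whole task reduces to showing that, for every $(L_1,\ldots,L_m)\in Y_m$ with image $(l_1,\ldots,l_m):=\phi_m(L_1,\ldots,L_m)$ --- so that $l_j=C(L_j\cap L_{j-1}^{\perp})$ under the convention $L_0:=0$ --- one has $L_{i+1}=z^{-1}(L_{i-1})$ if and only if $l_{i+1}=l_i^{\perp}$.

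First I would isolate the two subspaces being compared. Since $i<N$, both $L_{i-1}$ and $L_i$ are stable under $z$ and contained in $\mathrm{im}(z)$, so $C$ restricts to an isometric isomorphism on each of $U:=(z^{-1}L_{i-1})\cap L_{i-1}^{\perp}$ and $V:=(z^{-1}L_i)\cap L_i^{\perp}$ by Lemma~\ref{lemma:isomorphism}, and $\dim z^{-1}L_{i-1}=\dim L_{i-1}+2=i+1=\dim L_{i+1}$. Using $zL_j\subset L_{j-1}$ (recalled in the text) one gets $L_i\subset L_{i+1}\subset z^{-1}L_i$ and $L_i\subset z^{-1}L_{i-1}\subset z^{-1}L_i$, so $L_{i+1}$ and $z^{-1}L_{i-1}$ are two $(i+1)$-dimensional subspaces lying between $L_i$ and $z^{-1}L_i$. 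As intersecting with $L_i^{\perp}$ gives a bijection from such subspaces onto the subspaces of the two-dimensional space $V$, and $C|_V$ is injective, the equation $L_{i+1}=z^{-1}L_{i-1}$ is equivalent to $C(L_{i+1}\cap L_i^{\perp})=C((z^{-1}L_{i-1})\cap L_i^{\perp})$, i.e.\ to $l_{i+1}=C((z^{-1}L_{i-1})\cap L_i^{\perp})$.

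It therefore remains to check that $C((z^{-1}L_{i-1})\cap L_i^{\perp})=l_i^{\perp}$. The key observation is that the two lines $L_i\cap L_{i-1}^{\perp}$ (whose $C$-image is $l_i$ by definition of $\phi_m$) and $(z^{-1}L_{i-1})\cap L_i^{\perp}$ both lie inside $U$ --- the first because $L_i\subset z^{-1}L_{i-1}$, the second because $L_i^{\perp}\subset L_{i-1}^{\perp}$ --- and they are orthogonal to each other, since one sits in $L_i$ and the other in $L_i^{\perp}$. Because $C|_U$ is an isometric isomorphism it carries this orthogonal pair of lines to an orthogonal pair of lines in $\C^2$, giving $C((z^{-1}L_{i-1})\cap L_i^{\perp})=C(L_i\cap L_{i-1}^{\perp})^{\perp}=l_i^{\perp}$. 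Combined with the previous paragraph, this proves $L_{i+1}=z^{-1}(L_{i-1})\iff l_{i+1}=l_i^{\perp}$, and hence the lemma.

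I do not expect a genuine obstacle here: the statement is easy once the bookkeeping is organized. The only points that need care are keeping the two applications of Lemma~\ref{lemma:isomorphism} apart (one for $W=L_i$, one for $W=L_{i-1}$), the inclusions $L_i\cap L_{i-1}^{\perp}\subset U$ and $(z^{-1}L_{i-1})\cap L_i^{\perp}\subset U$, and the dimension count $\dim z^{-1}L_{i-1}=i+1$, which silently uses $L_{i-1}\subset\mathrm{im}(z)$ (valid since $i<N$).
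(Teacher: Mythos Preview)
Your proof is correct. The paper itself does not supply a proof of this lemma: it simply remarks that the statement is ``easy'' and cites \cite[Theorem~2.1]{cautis-2007}. Your argument fills this gap cleanly using only the tools already developed in the paper (specifically Lemma~\ref{lemma:isomorphism}), and the key step---observing that $L_i\cap L_{i-1}^{\perp}$ and $(z^{-1}L_{i-1})\cap L_i^{\perp}$ form an orthogonal pair of lines in $U=(z^{-1}L_{i-1})\cap L_{i-1}^{\perp}$, which the isometry $C|_U$ carries to an orthogonal pair in $\C^2$---is exactly right.
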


Let $f_{m,i}\colon (\mathbb{P}^1)^m\rightarrow (\mathbb{P}^1)^{m-2}$ be the forgetful map sending $(l_1,\ldots,l_m)\in (\mathbb{P}^1)^m$ to $(l_1,\ldots,l_{i-1},l_{i+2},\ldots,l_m)\in (\mathbb{P}^1)^{m-2}$, and let $$g_{m,i}:A_{m,i}\longrightarrow (\mathbb{P}^1)^{m-2}$$ be the restriction of $f_{m,i}$ to $A_{m,i}$.

\begin{lemma}\label{lemma:commute}
Let $\psi_{m,i}\colon X_{m,i}\rightarrow A_{m,i}$ be the restriction of $\phi_{m}$ to $X_{m,i}\subset Y_{m}$. Then the following diagram commutes:
$$
\xymatrix{
X_{m,i}\ar[r]^{q_{m,i}}\ar[d]^{\psi_{m,i}} &Y_{m-2}\ar[d]^{\phi_{m-2}}\\
A_{m,i}\ar[r]^{g_{m,i}} &(\mathbb{P}^1)^{m-2}
}
$$
\end{lemma}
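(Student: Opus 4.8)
The plan is to verify directly that the two composites $\phi_{m-2}\circ q_{m,i}$ and $g_{m,i}\circ\psi_{m,i}$ agree on an arbitrary point $(L_1,\ldots,L_m)\in X_{m,i}$, by computing both sides coordinate by coordinate in $(\mathbb{P}^1)^{m-2}$. Since all maps in sight are explicit, this is essentially a bookkeeping exercise; the content lies in showing that the ``shift by two'' appearing in $q_{m,i}$ (where $L_{i+2},\ldots,L_m$ are replaced by $zL_{i+2},\ldots,zL_m$) is compatible, after applying $C$, with the plain forgetting performed by $g_{m,i}$.

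First I would write out $\phi_{m-2}\circ q_{m,i}(L_1,\ldots,L_m)$. By definition $q_{m,i}(L_1,\ldots,L_m)=(L_1,\ldots,L_{i-1},zL_{i+2},\ldots,zL_m)$, so applying $\phi_{m-2}$ gives the tuple whose first $i-1$ entries are $C(L_1),C(L_2\cap L_1^\perp),\ldots,C(L_{i-1}\cap L_{i-2}^\perp)$ — identical to the first $i-1$ entries of $\phi_m(L_1,\ldots,L_m)$ — and whose remaining entries are $C(zL_{i+2}\cap (L_{i-1})^\perp)$, then $C(zL_{j+1}\cap (zL_j)^\perp)$ for $j$ running from $i+2$ to $m-1$. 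On the other side, $g_{m,i}\circ\psi_{m,i}(L_1,\ldots,L_m)$ is obtained from $\phi_m(L_1,\ldots,L_m)=(C(L_1),\ldots,C(L_m\cap L_{m-1}^\perp))$ by deleting the $i$-th and $(i+1)$-st coordinates; so its first $i-1$ entries match automatically, and its remaining entries are $C(L_{i+2}\cap L_{i+1}^\perp),C(L_{i+3}\cap L_{i+2}^\perp),\ldots,C(L_m\cap L_{m-1}^\perp)$.

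Comparing, the task reduces to two identities. For the ``generic'' slots $j\ge i+2$ one must show $C(zL_{j+1}\cap (zL_j)^\perp)=C(L_{j+1}\cap L_j^\perp)$; this is exactly Lemma~\ref{lemma:key} applied to the stable subspace $W=L_j$ (note $\ker(z)=L_1\subset L_j$ and, since $(L_1,\ldots,L_m)\in Y_m$, $L_{j}\subset z^{-1}L_{j-1}\subset\ldots\subset\mathrm{im}(z)$, so the hypotheses hold), because that lemma says $z$ carries $z^{-1}L_j\cap L_j^\perp$ onto $L_{j+1}$-relevant data — more precisely, $z$ maps $(z^{-1}W)\cap W^\perp$ to $W\cap(zW)^\perp$ commuting with $C$, and intersecting with the line-bundle fibers gives $C(L_{j+1}\cap L_j^\perp)=C(zL_{j+1}\cap(zL_j)^\perp)$. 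For the single ``junction'' slot, one must show $C(zL_{i+2}\cap L_{i-1}^\perp)=C(L_{i+2}\cap L_{i+1}^\perp)$; here one uses the defining relation $L_{i+1}=z^{-1}(L_{i-1})$ of $X_{m,i}$, so $zL_{i+1}=L_{i-1}$ (using $L_{i-1}\subset\mathrm{im}(z)$), and then Lemma~\ref{lemma:key} with $W=L_{i+1}$ gives $z\colon (z^{-1}L_{i+1})\cap L_{i+1}^\perp\to L_{i+1}\cap(zL_{i+1})^\perp=L_{i+1}\cap L_{i-1}^\perp$ commuting with $C$; combined with $L_{i+2}\subset z^{-1}L_{i+1}$ this yields $C(L_{i+2}\cap L_{i+1}^\perp)=C(zL_{i+2}\cap L_{i-1}^\perp)$, as needed.

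The main obstacle I anticipate is purely notational: keeping the index shift straight, and being careful that each application of Lemma~\ref{lemma:key} is legitimate, i.e.\ that the relevant $L_j$ is sandwiched as $\ker(z)\subset L_j\subset\mathrm{im}(z)$ — the right-hand inclusion uses $L_j\in Y_m$ and needs $N\ge m$, which we may assume since $Y_m$ is independent of $N$. Once the two identities above are in place, commutativity of the square follows immediately, so I would organize the proof as: (1) recall the hypotheses of Lemma~\ref{lemma:key} are met for each $W=L_j$ with $j\ge 1$; (2) establish the generic identity for $j\ge i+2$; (3) establish the junction identity using the defining equation of $X_{m,i}$; (4) assemble the two composites and observe they coincide.
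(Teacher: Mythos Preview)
Your approach is essentially the paper's: compute both composites coordinate by coordinate and invoke Lemma~\ref{lemma:key} for the entries in positions $\geq i$. There is, however, one slip that needs correcting. You write ``$\ker(z)=L_1\subset L_j$'', but $\ker(z)=E_1=\mathrm{span}(e_1,f_1)$ is two-dimensional while $L_1$ is a line, so this equality is false; and for an arbitrary flag in $Y_m$ the inclusion $\ker(z)\subset L_j$ can fail for small $j$, so your item~(1) (that the hypotheses of Lemma~\ref{lemma:key} hold for every $W=L_j$ with $j\geq 1$) is not true as stated. The correct justification---and this is exactly what the paper does---uses the defining condition of $X_{m,i}$: for $j\geq i+1$ one has $L_j\supset L_{i+1}=z^{-1}L_{i-1}\supset z^{-1}(0)=\ker(z)$, and these are the only $j$ for which you actually need the lemma. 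With this fix your argument goes through. (Incidentally, since $zL_{i+1}=L_{i-1}$, the paper absorbs your ``junction'' slot into the ``generic'' one via the single identity $C(L_{j+2}\cap L_{j+1}^\perp)=C(zL_{j+2}\cap(zL_{j+1})^\perp)$, valid for all $j\geq i$.)
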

\begin{proof}
It is straightforward to check that $g_{m,i}\circ\psi_m$ maps $(L_1,\ldots,L_m)\in X_{m,i}$ to the tuple $(l'_1,\ldots,l'_{m-2})\in(\mathbb{P}^1)^{m-2}$, where
$$
l'_j=
\begin{cases}
C(L_j\cap L_{j-1}^{\perp}) &\mbox{if }\, j<i,\\
C(L_{j+2}\cap L_{j+1}^{\perp}) &\mbox{if }\, j\geq i,
\end{cases}
$$
and $\phi_{m-2}\circ q_{m,i}$ maps $(L_1,\ldots,L_m)\in X_{m,i}$ to the tuple $(l''_1,\ldots,l''_{m-2})\in(\mathbb{P}^1)^{m-2}$, where
$$
l''_j=
\begin{cases}
C(L_j\cap L_{j-1}^{\perp}) &\mbox{if }\, j<i,\\
C(zL_{j+2}\cap (zL_{j+1})^{\perp}) &\mbox{if }\, j\geq i.
\end{cases}
$$
To prove $g_{m,i}\circ \psi_m=\phi_{m-2}\circ q_{m,i}$, we must therefore show that
$$C(L_{j+2}\cap L_{j+1}^{\perp})=C(zL_{j+2}\cap (zL_{j+1})^{\perp})$$ holds for all $j\geq i$.
But if $j\geq i$, then $L_{j+1}\supset L_{i+1}=z^{-1}L_{i-1}\supset z^{-1}(0)=\mbox{ker}(z)$, and (by increasing $N$ if necessary) we can also assume that $L_{j+1}\subset\mbox{im}(z)$. Thus, Lemma~\ref{lemma:key} applied to $W:=L_{j+1}$ tells us that $z$ maps $(z^{-1}W)\cap W^{\perp}$ to $W\cap (zW)^{\perp}$, and that $C(v)=C(zv)$ for all $v\in (z^{-1}W)\cap W^{\perp}$. Now the equality $C(L_{j+2}\cap L_{j+1}^{\perp})=C(zL_{j+2}\cap (zL_{j+1})^{\perp})$ follows because $z$ maps $L_{j+2}\cap L_{j+1}^{\perp}\subset  (z^{-1}W)\cap W^{\perp}$ to $zL_{j+2}\cap (zL_{j+1})^{\perp}\subset W\cap (zW)^{\perp}$.
\end{proof}


\section{Proof of Proposition~\ref{prop:main}}\label{sec:proof}
In this section, we use the same notations as before, except that we now assume $m=2n$ (and hence $N\geq 2n$). Then the Springer variety $\mathfrak{B}_{n,n}$ is naturally contained in $Y_{2n}$ as
$$
\mathfrak{B}_{n,n}:=\{(L_1,\ldots,L_{2n})\in Y_{2n}\,\colon\,L_{2n}=E_n\},
$$
where $E_n:=\mbox{span}(e_1,\ldots,e_n)\oplus\mbox{span}(f_1,\ldots,f_n)$, and Proposition~\ref{prop:fung} tells us that the irreducible component $K_a\subset\mathfrak{B}_{n,n}\subset Y_{2n}$ associated to the crossingless matching $a\in B^n$ is equal to the set of all $(L_1,\ldots,L_{2n})\in Y_{2n}$ satisfying
$$
L_{s_a(j)}=z_n^{-d_a(j)}L_{j-1}
$$
for all $j\in O_a$, where $z_n\colon E_n\rightarrow E_n$ is the restriction of $z$ to $E_n$. A priori, $z_n^{-d_a(j)}L_{j-1}$ could a priori be a proper subspace of $z^{-d_a(j)}L_{j-1}$ (because $z^{-d_a(j)}L_{j-1}$ might not be contained in $E_n$), but it turns out that
$z_n^{-d_a(j)}L_{j-1}$ is equal to $z^{-d_a(j)}L_{j-1}$ whenever $(L_1,\ldots,L_{2n})\in K_a$. In fact, we have:

\begin{lemma} 
$K_a=\{(L_1,\ldots,L_{2n})\in Y_{2n}\,\colon\, L_{s_a(j)}=z^{-d_a(j)}L_{j-1}\,\forall j\in O_a\}$.
\end{lemma}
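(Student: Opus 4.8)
The plan is to prove the two inclusions separately; almost everything is a routine verification, and the only genuine content is a standard combinatorial fact about crossingless matchings.

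\emph{First inclusion: $K_a$ is contained in the right-hand side.} Let $(L_1,\ldots,L_{2n})\in K_a$. Since $K_a\subset\mathfrak{B}_{n,n}$ we have $L_{2n}=E_n$, hence $L_{j-1}\subset E_n$ for all $j$, and therefore $z_n^{-d_a(j)}L_{j-1}=E_n\cap z^{-d_a(j)}L_{j-1}\subset z^{-d_a(j)}L_{j-1}$. To upgrade this to an equality I would compare dimensions: since $\ker(z^{d})=E_{d}$ and $\operatorname{im}(z^{d})=E_{N-d}$, the map $z^{d}$ induces an isomorphism $z^{-d}W/\ker(z^{d})\cong W\cap E_{N-d}$ for any subspace $W$, so $\dim(z^{-d}W)=2d+\dim(W\cap E_{N-d})$; applying this with $W=L_{j-1}\subset E_n\subset E_{N-d_a(j)}$ (legitimate since $N\geq 2n$ and $d_a(j)\leq n$) gives
$$
\dim\bigl(z^{-d_a(j)}L_{j-1}\bigr)=2d_a(j)+(j-1)=\bigl(s_a(j)-j+1\bigr)+(j-1)=s_a(j)=\dim L_{s_a(j)}.
$$
As $L_{s_a(j)}=z_n^{-d_a(j)}L_{j-1}\subset z^{-d_a(j)}L_{j-1}$ and both spaces have dimension $s_a(j)$, they agree.

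\emph{Second inclusion: the right-hand side is contained in $K_a$.} Let $(L_1,\ldots,L_{2n})\in Y_{2n}$ satisfy $L_{s_a(j)}=z^{-d_a(j)}L_{j-1}$ for all $j\in O_a$. The key step is to show these equations already force $L_{2n}=E_n$. I would follow the ``outermost'' cups of $a$: set $p_0:=0$, $p_1:=s_a(1)$ (recall $1\in O_a$), and $p_{k+1}:=s_a(p_k+1)$ whenever $p_k<2n$. The combinatorial claim, proved by induction on $k$ from the no-crossing property (no cup of $a$ has exactly one endpoint inside an outermost cup), is that $\{1,\ldots,p_k\}$ is a disjoint union of complete cups of $a$; hence, when $p_k<2n$, the point $p_k+1$ must open a fresh cup, so $p_k+1\in O_a$, $p_{k+1}$ is defined, and $p_{k+1}>p_k$. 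Thus $p_0<p_1<\cdots$ is an increasing sequence of even integers (evenness from $d_a(p_k+1)=(p_{k+1}-p_k)/2\in\Z$ and $p_0=0$) bounded by $2n$, so it reaches $2n$, say $p_t=2n$. Since $z^{-1}E_r=\ker(z^{r+1})=E_{r+1}$, an immediate induction gives $L_{p_k}=E_{p_k/2}$: indeed $L_{p_0}=0=E_0$, and
$$
L_{p_{k+1}}=z^{-d_a(p_k+1)}L_{p_k}=z^{-(p_{k+1}-p_k)/2}E_{p_k/2}=E_{p_{k+1}/2}.
$$
In particular $L_{2n}=L_{p_t}=E_n$, so $(L_1,\ldots,L_{2n})\in\mathfrak{B}_{n,n}$. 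Finally, for each $j\in O_a$ we now have $L_{j-1}\subset E_n$ and $z^{-d_a(j)}L_{j-1}=L_{s_a(j)}\subset L_{2n}=E_n$, whence $z^{-d_a(j)}L_{j-1}=E_n\cap z^{-d_a(j)}L_{j-1}=z_n^{-d_a(j)}L_{j-1}$; so the defining equations of $K_a$ hold and the flag lies in $K_a$ by Proposition~\ref{prop:fung}.

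The one step that is not mere bookkeeping is the combinatorial claim that $\{1,\ldots,p_k\}$ is always a union of complete cups --- equivalently, that the chain $0\mapsto s_a(1)\mapsto s_a(s_a(1)+1)\mapsto\cdots$ runs through the right endpoints of the outermost cups and in particular reaches $2n$. This is elementary (it is exactly the statement that the outermost cups and their nested interiors partition $\{1,\ldots,2n\}$), and I would establish it by the same induction on $k$ used to prove $L_{p_k}=E_{p_k/2}$. The algebraic inputs --- $z^{-1}E_r=E_{r+1}$ and $\dim(z^{-d}W)=2d+\dim(W\cap E_{N-d})$ --- are immediate.
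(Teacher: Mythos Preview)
Your proof is correct and follows essentially the same approach as the paper: a dimension count to identify $z_n^{-d_a(j)}L_{j-1}$ with $z^{-d_a(j)}L_{j-1}$ for the forward inclusion, and an induction along the outermost cups (your sequence $p_0<p_1<\cdots$ is exactly the paper's $0=k_1-1<l_1<\cdots<l_r=2n$) to deduce $L_{2n}=E_n$ for the reverse inclusion. Your write-up is in fact a bit more careful than the paper's in justifying the dimension formula via $\dim(z^{-d}W)=2d+\dim(W\cap E_{N-d})$ and in explicitly closing the loop back to the $z_n$-conditions at the end.
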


\begin{proof} Suppose $(L_1,\ldots,L_{2n})$ is contained in $K_a$. Then the condition $z_n^{-d_a(j)}L_{j-1}=L_{s_a(j)}$, combined with $\mbox{dim}(L_{j-1})=j-1$, $\mbox{dim}(L_{s_a(j)})=s_a(j)$, and $\mbox{dim}(\mbox{ker}(z))=2$, implies
\begin{eqnarray*}
\mbox{dim}(z^{-d_a(j)}L_{j-1})&=&2d_a(j)+\mbox{dim}(L_{j-1})=2d_a(j)+j-1=s_a(j)\\
&=&\mbox{dim}(L_{s_a(j)})=\mbox{dim}(z_n^{-d_a(j)}L_{j-1}),
\end{eqnarray*}
and thus $z^{-d_a(j)}L_{j-1}=z_n^{-d_a(j)}L_{j-1}$. Conversely, suppose $(L_1,\ldots,L_{2n})\in Y_{2n}$ satisfies $z^{-d_a(j)}L_{j-1}=L_{s_a(j)}$ for all $j\in O_a$. Then we must show that $L_{2n}=E_n$.
To prove this, let us call a pair $(k,l)\in a$ {\em outermost} if there is no pair $(k',l')\in a$ such that $k'<k<l<l'$. Then the outermost pairs in $a$ form a sequence $(k_1,l_1),(k_2,l_2),\ldots,(k_r,l_r)\in a$ such that $k_1=1$, $l_r=2n$, and $k_{s+1}=l_s+1$ for all $s<r$, and $d_a(k_1)+\ldots+d_a(k_r)=n$. Using $z^{-d_a(j)}L_{j-1}=L_{s_a(j)}$ successively for $j\in\{k_r,k_{r-1},\ldots,k_1\}\subset O_a$, we obtain
$$
L_{2n}=z^{-d_a(k_r)}L_{l_{r-1}}=z^{-d_a(k_r)}z^{-d_a(k_{r-1})}L_{l_{r-2}}=\ldots=z^{-n}(0)=E_n,
$$
as desired.
\end{proof}

From now on, $a\in B^n$ is a fixed crossingless matching of $2n$ points, and $i$ is an index such that $s_a(i)=i+1$, i.e., such that $(i,i+1)$ is a pair in $a$. We denote by $a'\in B^{n-1}$ the crossingless matching obtained from $a$ by removing the pair $(i,i+1)$ (and renumbering indices $j\geq i+2$ such that $j\in\{i+2,\ldots,2n\}$ becomes $j-2\in\{i,\ldots,2n-2\}$), and by $q$ the map $q_{2n,i}\colon X_{2n,i}\rightarrow Y_{2n-2}$, defined as in the previous section.

\begin{lemma}\label{lemma:q}
$K_a=q^{-1}(K_{a'})$.
\end{lemma}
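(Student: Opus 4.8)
The plan is to prove the set equality $K_a = q^{-1}(K_{a'})$ by unwinding the defining equations of both sides and matching them index by index. Recall that $X_{2n,i}$ consists of flags with $L_{i+1} = z^{-1}(L_{i-1})$, which is exactly the condition $L_{s_a(i)} = z^{-d_a(i)}L_{i-1}$ coming from the pair $(i,i+1)\in a$ (since $s_a(i)=i+1$ forces $d_a(i)=1$). So $K_a\subset X_{2n,i}$, and it makes sense to restrict $q$ to $X_{2n,i}$ and ask about $q^{-1}(K_{a'})$. I would first observe that, for a flag $(L_1,\ldots,L_{2n})\in X_{2n,i}$, the image $q(L_1,\ldots,L_{2n}) = (L_1,\ldots,L_{i-1}, zL_{i+2},\ldots,zL_{2n})$; write $(M_1,\ldots,M_{2n-2})$ for this tuple, so $M_j = L_j$ for $j<i$ and $M_j = zL_{j+2}$ for $j\geq i$.

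The core of the argument is a dictionary between the equations defining $K_{a'}$ (phrased in terms of the $M_j$ and the pairs of $a'$) and the remaining equations defining $K_a$ (phrased in terms of the $L_j$ and the pairs of $a\setminus\{(i,i+1)\}$). A pair $(k,l)\in a$ with $k,l < i$ corresponds to the pair $(k,l)\in a'$, and here $d_{a'}(k)=d_a(k)$ since removing the innermost cup $(i,i+1)$ does not change the count of cups nested inside $(k,l)$; the equation $M_{l} = z^{-d_{a'}(k)}M_{k-1}$ is literally $L_l = z^{-d_a(k)}L_{k-1}$. A pair $(k,l)\in a$ with $k,l > i+1$ corresponds to the pair $(k-2,l-2)\in a'$; here again $d_{a'}(k-2)=d_a(k)$ (the cup $(i,i+1)$ lies outside this cup, or to its left, so it is never counted among cups strictly inside), and the equation $M_{l-2} = z^{-d_{a'}(k-2)}M_{k-3}$ unwinds to $zL_l = z^{-d_a(k)}(zL_{k-1}) = z^{-d_a(k)+1}L_{k-1}$, i.e. $z(L_l) = z(z^{-d_a(k)}L_{k-1})$. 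The remaining case is a pair $(k,l)\in a$ with $k<i$ and $l>i+1$ strictly enclosing $(i,i+1)$; this becomes $(k,l-2)\in a'$, and now $d_{a'}(k) = d_a(k)-1$ because exactly one fewer cup (namely $(i,i+1)$) sits strictly inside. The corresponding equation becomes $M_{l-2} = z^{-(d_a(k)-1)}M_{k-1}$, i.e. $zL_l = z^{-d_a(k)+1}L_{k-1}$, again of the form $z(L_l) = z(z^{-d_a(k)}L_{k-1})$.

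So in all cases except those pairs entirely left of $i$, the $K_{a'}$-equation for $q(L_1,\ldots,L_{2n})$ is equivalent to $z$ applied to the corresponding $K_a$-equation $L_{s_a(k)} = z^{-d_a(k)}L_{k-1}$. The one direction is immediate: if $(L_1,\ldots,L_{2n})\in K_a$, then the $K_a$-equations hold, so applying $z$ gives the $K_{a'}$-equations, hence $q(L_1,\ldots,L_{2n})\in K_{a'}$, so $K_a\subset q^{-1}(K_{a'})$. For the converse I would argue that $z$ applied to the $K_a$-equation implies the $K_a$-equation itself, using a dimension count: both $L_{s_a(k)}$ and $z^{-d_a(k)}L_{k-1}$ have dimension $s_a(k)$ (this is exactly the computation in the previous lemma), and both contain $\ker z$ by stability, so they are determined by their images under $z$ together with the fact that they contain $\ker z$; equivalently, a stable subspace $V\supset\ker z$ is recovered from $zV$ as $z^{-1}(zV)\cap(\text{correct dimension})$, and since $L_{s_a(k)}$ is already known to be a term of a flag with $zL_{s_a(k)}\subset L_{s_a(k)-1}$, equality of $z$-images plus equality of dimensions plus containment of $\ker z$ forces equality. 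The main obstacle is bookkeeping: one must be careful that $z^{-d_a(k)}L_{k-1}$ is genuinely stable and contains $\ker z$ so that the "recover $V$ from $zV$" step is legitimate (which follows from stability of $L_{k-1}$ and $d_a(k)\geq 1$), and one must verify the $d$-value shifts in each of the three cases above without sign errors. Once that dictionary is nailed down, the equality $K_a = q^{-1}(K_{a'})$ drops out, and it also follows a posteriori that $q^{-1}(K_{a'})\subset X_{2n,i}$ automatically picks out flags with $L_{i+1}=z^{-1}L_{i-1}$, consistent with $K_a\subset X_{2n,i}$.
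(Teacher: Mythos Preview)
Your argument is correct and follows essentially the same route as the paper: the same three-case analysis on whether a pair $(k,l)\in a\setminus\{(i,i+1)\}$ lies to the left of, straddles, or lies to the right of $(i,i+1)$, with the converse direction handled by the observation that $z$ can be undone on subspaces containing $\ker z$. The paper phrases the converse slightly more directly (apply $z^{-1}$ and use $z^{-1}(zV)=V$ whenever $V\supset\ker z$, noting $L_{s_a(j)}\supset L_{i+1}=z^{-1}L_{i-1}\supset\ker z$), which makes your dimension count unnecessary, but the substance is the same.
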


\begin{proof} Since $s_a(i)=i+1$ and $d_a(i)=(s_a(i)-i+1)/2=1$, the equality $L_{i+1}=z^{-1}L_{i-1}$ holds for each $(L_1,\ldots,L_{2n})\in K_a$, and thus $K_a\subset Y_{2n}$ is contained in $X_{2n,i}$. It remains to show that an element $(L_1,\ldots,L_{2n})\in X_{2n,i}$ satisfies the conditions $L_{s_a{j}}=z^{-d_a(j)}L_{j-1}$ for all $j\in O_a\setminus\{i\}$ if and only if the element $(L'_1,\ldots,L'_{2n-2}):=q(L_1,\ldots,L_{2n})=(L_1,\ldots,L_{i-1},zL_{i+2},\ldots,zL_{2n})\in Y_{2n-2}$ satisfies the conditions $L'_{s_{a'}(j)}=z^{-d_{a'}(j)}L'_{j-1}$ for all $j\in O_{a'}$. We divide the proof into three cases.

{\em Case 1.} If $j<s_a(j)<i$, then the equivalence
$$L_{s_a(j)}=z^{-d_a(j)}L_{j-1}\Longleftrightarrow L'_{s_{a'}(j)}=z^{-d_{a'}(j)}L'_{j-1}$$
is obvious because the quantities appearing on either side of $\Longleftrightarrow$ are identical.

{\em Case 2.} If $j<i<i+1<s_a(j)$, then $L'_{j-1}=L_{j-1}$, $L'_{s_{a'}(j)}=zL_{s_a(j)}$, and $d_{a'}(j)=d_a(j)-1$, so we must show:
$$
L_{s_a(j)}=z^{-d_a(j)}L_{j-1}\Longleftrightarrow zL_{s_a(j)}=z^{-d_{a}(j)+1}L_{j-1}
$$
But this follows simply by applying $z$ (resp., $z^{-1}$) to the equalities on either side of $\Longleftrightarrow$, and observing that $z^{-1}(zL_{s_a(j)})=L_{s_a(j)}$ (because $L_{s_a(j)}\supset L_{i+1}=z^{-1}L_{i-1}\supset z^{-1}(0)=\mbox{ker}(z)$), and that $z(z^{-d_a(j)}L_{j-1})=z^{-d_{a}(j)+1}L_{j-1}$ (because, by increasing $N$ if necessary, we may assume $z^{-d_{a}(j)+1}L_{j-1}\subset\mbox{im}(z)$).

{\em Case 3.} If $i+1<j<s_a(j)$, then $L'_{j-3}=zL_{j-1}$, $L_{s_{a'}(j-2)}=zL_{s_a(j)}$, and $d_{a'}(j-2)=d_a(j)$, so we must show:
$$
L_{s_a(j)}=z^{-d_a(j)}L_{j-1}\Longleftrightarrow\
zL_{s_a(j)}=z^{-d_a(j)}zL_{j-1}
$$
As in Case~2, this follows by applying $z$ (resp., $z^{-1}$) to the equalities on either side of $\Longleftrightarrow$.\end{proof}

Note that (since $s_a(j)-j$ is odd for all $j\in O_a$) the involutive diffeomorphism $I_{2n}\colon(\mathbb{P}^1)^{2n}\rightarrow(\mathbb{P}^1)^{2n}$ defined in the introduction exchanges the subset $S_a\subset(\mathbb{P}^1)^{2n}$ with the subset
$$
T_a:=\{(l_1,\ldots,l_{2n})\in(\mathbb{P}^1)^{2n}\,\colon\,l_{s_a(j)}=l_j^{\perp}\,\forall j\in O_a\}\,\subset\,(\mathbb{P}^1)^{2n}
$$
To prove Proposition~\ref{prop:main}, we must therefore show that $\phi_{2n}$ maps $K_a$ to $T_a$ for all $a\in B^n$. We will need the following lemma, in which $a$, $i$ and $a'$ are as in the previous lemma, and $g$ denotes the map $g_{2n,i}\colon A_{2n,i}\rightarrow (\mathbb{P}^1)^{2n-2}$, defined as in the previous section.

\begin{lemma}\label{lemma:g}
$T_a=g^{-1}(T_{a'})$.
\end{lemma}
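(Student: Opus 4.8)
The plan is to mimic exactly the three-case analysis used in the proof of Lemma~\ref{lemma:q}, transporting it to the combinatorial side via the forgetful map $g=g_{2n,i}$. Recall that $g$ sends $(l_1,\ldots,l_{2n})$ to $(l_1,\ldots,l_{i-1},l_{i+2},\ldots,l_{2n})$, deleting the two coordinates in positions $i$ and $i+1$; note also that $A_{2n,i}$ is precisely the locus $l_{i+1}=l_i^\perp$, so $g$ has a well-defined section and $g^{-1}(T_{a'})$ makes sense as a subset of $A_{2n,i}$. First I would observe that any $(l_1,\ldots,l_{2n})\in T_a$ automatically lies in $A_{2n,i}$: indeed $(i,i+1)$ is a pair of $a$, so $i\in O_a$ with $s_a(i)=i+1$, and the defining condition of $T_a$ for $j=i$ reads $l_{i+1}=l_i^\perp$. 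Hence both $T_a$ and $g^{-1}(T_{a'})$ sit inside $A_{2n,i}$, and it remains to check that a point of $A_{2n,i}$ satisfies the $T_a$-conditions $l_{s_a(j)}=l_j^\perp$ for $j\in O_a\setminus\{i\}$ if and only if its image $(l_1',\ldots,l_{2n-2}')=g(l_1,\ldots,l_{2n})$ satisfies the $T_{a'}$-conditions $l_{s_{a'}(j)}'=(l_j')^\perp$ for $j\in O_{a'}$.

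Next I would run the same case split on the position of a pair $(j,s_a(j))$ of $a$ relative to the removed pair $(i,i+1)$, using the index bookkeeping from Lemma~\ref{lemma:q}. In Case~1, $j<s_a(j)<i$: here $l_{j}'=l_j$, $l_{s_a(j)}'=l_{s_a(j)}$, and $s_{a'}(j)=s_a(j)$, so the two conditions are literally the same equation. In Case~3, $i+1<j<s_a(j)$: the coordinates shift by $2$, so $l_{j-2}'=l_j$ and $l_{s_a(j)-2}'=l_{s_a(j)}$ with $s_{a'}(j-2)=s_a(j)-2$, and again the two conditions coincide after relabelling. The only genuinely nested case is Case~2, $j<i<i+1<s_a(j)$: then $l_j'=l_j$ but $l_{s_a(j)}'=l_{s_a(j)+?}$—more precisely, deleting positions $i,i+1$ moves $s_a(j)$ down by $2$ to $s_{a'}(j)=s_a(j)-2$, so $l_{s_{a'}(j)}'=l_{s_a(j)}$—hence once more the condition $l_{s_a(j)}=l_j^\perp$ is identical to $l_{s_{a'}(j)}'=(l_j')^\perp$. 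So, in contrast to Lemma~\ref{lemma:q}, all three cases here are trivial identities of equations, with no need to apply $z$ or $z^{-1}$; the orthogonal-complement operation is simply inherited coordinate-by-coordinate under deletion.

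The one point that requires a little care—and is the closest thing to an obstacle—is making sure the re-indexing is self-consistent, i.e. that $a'$ really is a crossingless matching of $2n-2$ points, that every pair of $a'$ arises from exactly one pair of $a$ other than $(i,i+1)$, and that the deleted positions $i,i+1$ never fall strictly between the two endpoints of another pair (so that Case~2's bookkeeping, "$s_a(j)$ drops by $2$", is correct and not "drops by $1$"). This last fact is exactly the crossingless condition: if $(i,i+1)$ were nested inside $(j,s_a(j))$ with only one of $i,i+1$ between $j$ and $s_a(j)$, the matching would have a crossing. Granting this—which is already implicit in the statement of Lemma~\ref{lemma:q} and its proof—the proof of Lemma~\ref{lemma:g} is just the observation that $g$ commutes with taking orthogonal complements in each surviving coordinate, so I would write it up in two or three lines, citing the case analysis of Lemma~\ref{lemma:q} for the index arithmetic and remarking that the $z$-manipulations there collapse to identities on the $(\mathbb{P}^1)^m$ side.

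\begin{proof}
By definition $g=g_{2n,i}$ is the restriction to $A_{2n,i}$ of the map deleting the coordinates in positions $i$ and $i+1$. Since $(i,i+1)\in a$, we have $i\in O_a$ and $s_a(i)=i+1$, so the $j=i$ condition defining $T_a$ is $l_{i+1}=l_i^\perp$; hence $T_a\subset A_{2n,i}$, and $g^{-1}(T_{a'})$ is likewise a subset of $A_{2n,i}$ by construction. Write $(l'_1,\ldots,l'_{2n-2})=g(l_1,\ldots,l_{2n})$. With the same index bookkeeping as in the proof of Lemma~\ref{lemma:q}, for a pair $(j,s_a(j))\in a$ with $j\neq i$ one checks: if $j<s_a(j)<i$ then $l'_{j}=l_j$, $l'_{s_{a'}(j)}=l_{s_a(j)}$; if $j<i<i+1<s_a(j)$ then $l'_{j}=l_j$, $l'_{s_{a'}(j)}=l_{s_a(j)}$ (the crossingless condition on $a$ guarantees both of $i,i+1$ lie between $j$ and $s_a(j)$, so $s_a(j)$ decreases by exactly $2$); and if $i+1<j<s_a(j)$ then $l'_{j-2}=l_j$, $l'_{s_{a'}(j-2)}=l_{s_a(j)}$. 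In every case, the condition $l_{s_a(j)}=l_j^\perp$ is the identical equation to the condition $l'_{s_{a'}(j)}=(l'_j)^\perp$. Therefore $(l_1,\ldots,l_{2n})\in A_{2n,i}$ lies in $T_a$ if and only if $g(l_1,\ldots,l_{2n})$ lies in $T_{a'}$, i.e. $T_a=g^{-1}(T_{a'})$.
\end{proof}
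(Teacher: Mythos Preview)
Your proof is correct and follows the same approach as the paper, which simply states that the lemma ``follows directly from the definitions of $g$, $A_{2n,i}$, $T_a$ and $T_{a'}$.'' You have unpacked this one-line proof into an explicit (and entirely accurate) case analysis parallel to that of Lemma~\ref{lemma:q}, which is exactly the verification the paper's sentence is alluding to.
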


\begin{proof} This follows directly from the definitions of $g$, $A_{2n,i}$, $T_a$ and $T_{a'}$.
\end{proof}

We are now ready to prove Proposition~\ref{prop:main}.

\begin{proof}[Proof of Proposition~\ref{prop:main}]
Induction on $n$. The case $n=1$ is trivial because the only crossingless matching of $2$ points is $a_1:=\{(1,2)\}$, and $\phi_2\colon Y_2\rightarrow \mathbb{P}^1\times\mathbb{P}^1$ maps $\mathfrak{B}_{1,1}=K_{a_1}=X_{2,1}\subset Y_2$ diffeomorphically to $T_{a_1}=A_{2,1}\subset\mathbb{P}^1\times\mathbb{P}^1$.

Thus, let $n>1$, and suppose we have already proven the proposition for $n-1$. Let $a\in B^n$. Then there is an $i\in\{1,\ldots,2n-1\}$ such that $s_a(i)=i+1$, i.e., such that $(i,i+1)\in a$. As above, we denote by $a'\in B^{n-1}$ the crossingless matching obtained from $a$ by removing the pair $(i,i+1)$ (and renumbering all $j\geq i+2$), and by $q$ (resp., $g$) the map $q_{2n,i}$ (resp., $g_{2n,i}$). By induction, we know that $\phi_{2n-2}$ maps $K_{a'}$ to $T_{a'}$, so Lemma~\ref{lemma:commute} gives us the following commutative diagram:
$$
\xymatrix@M=0.2cm{
q^{-1}(K_{a'})\ar[d]^{\psi_{2n,i}
}
\ar@{^{(}->}[r] &X_{2n,i}
\ar[r]^{q}\ar[d]^{\psi_{2n,i}} &Y_{2n-2}\ar[d]^{\phi_{2n-2}}
&K_{a'}\ar@{_{(}->}[l]\ar[d]^{\phi_{2n-2}
}
\\
g^{-1}(T_{a'})\ar@{^{(}->}[r] &A_{2n,i}
\ar[r]^{g} &(\mathbb{P}^1)^{2n-2} &T_{a'}\ar@{_{(}->}[l]
}
$$
Hence we get $\psi_{2n,i}(q^{-1}(K_{a'}))=g^{-1}(T_{a'})$, and by Lemmas~\ref{lemma:q} and \ref{lemma:g}, this implies $$\psi_{2n,i}(K_a)=T_a,$$ thus completing the inductive step.
\end{proof}

\bibliography{springer}
\end{document}